\journal{}
\newcommand{\set}[1]{\left\{#1\right\}}
\newcommand{\p}{\partial}
\newcommand{\mx}{\mathbf{x}}
\newcommand{\my}{\mathbf{y}}
\newcommand{\mz}{\mathbf{z}}
\newcommand{\mU}{\mathbf{U}}
\newcommand{\mV}{\mathbf{V}}
\newcommand{\mW}{\mathbf{W}}
\newcommand{\vt}{\boldsymbol{\theta}}
\newcommand{\vv}{\boldsymbol{\vartheta}}
\newtheorem{thm}{Theorem}[section]
\newtheorem{lem}[thm]{Lemma}
\begin{document}

\begin{frontmatter}

%% Title, authors and addresses

%% use the tnoteref command within \title for footnotes;
%% use the tnotetext command for theassociated footnote;
%% use the fnref command within \author or \address for footnotes;
%% use the fntext command for theassociated footnote;
%% use the corref command within \author for corresponding author footnotes;
%% use the cortext command for theassociated footnote;
%% use the ead command for the email address,
%% and the form \ead[url] for the home page:
%% \title{Title\tnoteref{label1}}
%% \tnotetext[label1]{}
%% \author{Name\corref{cor1}\fnref{label2}}
%% \ead{email address}
%% \ead[url]{home page}
%% \fntext[label2]{}
%% \cortext[cor1]{}
%% \address{Address\fnref{label3}}
%% \fntext[label3]{}

\title{Localization of small perfectly conducting cracks from far-field pattern with unknown frequency}

%% use optional labels to link authors explicitly to addresses:
%% \author[label1,label2]{}
%% \address[label1]{}
%% \address[label2]{}

\author{Jung Ho Park}
\ead{jhoxxx@kookmin.ac.kr}
\author{Won-Kwang Park\corref{corpark}}
\ead{parkwk@kookmin.ac.kr}
\address{Department of Mathematics, Kookmin University, Seoul, 136-702, Korea.}
\cortext[corpark]{Corresponding author}

\begin{abstract}
In inverse scattering problem, it is well-known that subspace migration yields very accurate locations of small perfectly conducting cracks when applied frequency is known. In contrast, when applied frequency is unknown, inaccurate locations are identified via subspace migration with wrong frequency data. However, this fact has been examined through the experimental results so, the reason of such phenomenon has not been theoretically investigated. In this paper, we analyze mathematical structure of subspace migration with unknown frequency by establishing a relationship with Bessel function of order zero of the first kind. Identified structure of subspace migration and corresponding results of numerical simulation answer that why subspace migration with unknown frequency yields inaccurate location of cracks and gives an idea of improvement.
\end{abstract}

\begin{keyword}
Subspace migration \sep unknown frequency \sep Multi-Static Response (MSR) matrix \sep Bessel function \sep numerical simulation

%% keywords here, in the form: keyword \sep keyword

%% PACS codes here, in the form: \PACS code \sep code

%% MSC codes here, in the form: \MSC code \sep code
%% or \MSC[2008] code \sep code (2000 is the default)
\end{keyword}

\end{frontmatter}

%% \linenumbers

%% main text

%% The Appendices part is started with the command \appendix;
%% appendix sections are then done as normal sections
%% \appendix

%% \section{}
%% \label{}

\section{Introduction}
It is well known that subspace migration is fast, effective and stable non-iterative detecting algorithm of small, perfectly conducting cracks in inverse scattering problem (see \cite{AGKPS,KP} for instance). However, for a successful application, information of applied frequency must be known. So, many researches assumed that applied frequency is known and investigated certain properties of single- and multi-frequency subspace migration algorithms, refer to \cite{AGKPS,JKHP,P1,P2,PP} and references therein.

However, if one has no \textit{a priori} information of applied frequency, subspace migration is inadequate to detect unknown targets. Particularly, in the problem of finding location of cracks, some information of location can be examined but identifying exact location is still impossible. Unfortunately, this fact has been examined heuristically through the results of numerical simulation so, as far as we know, mathematical analysis of subspace migration is still needed. This gives a motivation for this study to analyze structure of subspace migration and to develop an algorithm for finding exact location of cracks.

In this manuscript, we extend the research \cite{JKHP} of structure analysis of subspace migration with unknown frequency information. This is based on the fact that singular vectors associated with the nonzero singular values of Multi-Static Response (MSR) matrix can be represented by an asymptotic expansion formula in the existence of cracks. Throughout careful derivation, we identify that subspace migration imaging functional can be represented by the square of Bessel function of order zero of the first kind. Based on this representation, we investigate the reason why inexact locations of cracks are identified via subspace migration and develop a simple algorithm for finding exact locations.

This paper is organized as follows. In Section \ref{sec2}, we survey two-dimensional direct scattering problems, the asymptotic expansion formula in the presence of small perfectly conducting cracks, and subspace migration. In Section \ref{sec3}, we investigate the structure of subspace migration with unknown applied frequency by establishing a relationship with Bessel function of integer order of the first kind. Furthermore, we propose an exact location search algorithm by creating a small scatterer. Section \ref{sec4} presents some results of numerical simulations to support our investigation. Section \ref{sec5} presents a short conclusion.

\section{Preliminaries}\label{sec2}
In this section, we briefly introduce two-dimensional direct scattering problems in the presence of small, linear perfectly conducting crack(s), the asymptotic expansion formula, and subspace migration.

\subsection{Direct scattering problems and the asymptotic expansion formula}
First, we consider the two-dimensional electromagnetic scattering by $M-$different linear perfectly conducting cracks with same small length $2\ell$, denoted by $\Gamma_m$, $m=1,2,\cdots,M$, located in the homogeneous space $\mathbb{R}^2$ such that
\[\Gamma_m=\set{\mz_m=[x_m,y_m]^\mathrm{T}:-\ell\leq x_m\leq\ell},\]
and let $\Gamma$ be the collection of $\Gamma_m$. In this paper, we assume that $\Gamma_m$ are sufficiently separated from each other.

Let $u_{\mathrm{tot}}(\mx,\vt)$ satisfies following Helmholtz equation
\begin{equation}\label{Helmholtz}
  \left\{\begin{array}{rcl}
            \triangle u_{\mathrm{tot}}(\mx,\vt)+\omega^2u_{\mathrm{tot}}(\mx,\vt)=0 & \mbox{in} & \mathbb{R}^2\backslash\Gamma \\
            \noalign{\medskip}u_{\mathrm{tot}}(\mx,\vt)=0 & \mbox{on} & \Gamma,
          \end{array}
  \right.
\end{equation}
where $\omega=2\pi/\lambda$ denote \textit{unknown} angular frequency with wavelength $\lambda$ such that $\ell\ll\lambda$. In this paper, we assume that $\omega$ is positive definite and $\omega^2$ is not an eigenvalue of (\ref{Helmholtz}). Let us denote $u_{\mathrm{inc}}(\mx,\vt)=\exp(i\omega\vt\cdot\mx)$ be the incident plane wave with direction $\vt$ on the two-dimensional unit circle $\mathbb{S}^1$, and $u_{\mathrm{scat}}(\mx,\vt)$ be the unknown scattered field, which satisfies the Sommerfeld radiation condition
\[\lim_{|\mx|\to\infty}|\mx|^{1/2}\left(\frac{\p u_{\mathrm{scat}}(\mx,\vt)}{\p|\mx|}-iku_{\mathrm{ scat}}(\mx,\vt)\right)=0,\]
uniformly into all directions $\hat{\mx}=\mx/|\mx|$.

The far-field pattern $u_\infty(\hat{\mx},\vt;\omega)$ defined on $\mathbb{S}^1$ can be expressed in the form
\[u_{\mathrm{ scat}}(\mx,\vt)=\frac{e^{i\omega|\mx|}}{|\mx|^{1/2}}\left(u_\infty(\hat{\mx},\vt;\omega)+O\left(\frac{1}{|\mx|}\right)\right)\]
uniformly into all directions $\hat{\mx}=\mx/|\mx|$ and $|\mx|\longrightarrow+\infty$. Then, based on \cite{AKLP}, the far-field pattern can be represented as the following asymptotic expansion formula.

\begin{lem}[Asymptotic expansion formula] For $0<\ell<2$ and $\ell\ll\lambda$, the far-field pattern can be represented as follows:
  \begin{align}
  \begin{aligned}\label{AsymptoticFormula}
    u_\infty(\hat{\mx},\vt;\omega)&=-\frac{2\pi}{\ln(\ell/2)}\sum_{m=1}^{M}u_{\mathrm{inc}}(\mz,\vt;\omega)\overline{u_{\mathrm{ inc}}(\mz,\hat{\mx};\omega)}+O\bigg(\frac{1}{|\ln\ell|^2}\bigg)\\
    &=-\frac{2\pi}{\ln(\ell/2)}\sum_{m=1}^{M}\exp\bigg(i\omega(\vt-\hat{\mx})\cdot\mz_m\bigg)+O\bigg(\frac{1}{|\ln\ell|^2}\bigg).
  \end{aligned}
  \end{align}
\end{lem}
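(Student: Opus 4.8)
The plan is to convert the exterior Dirichlet problem~\eqref{Helmholtz} into a boundary integral equation and then extract its leading behavior as $\ell\to0$. First I would represent the scattered field by a single-layer potential over the cracks,
\[
u_{\mathrm{scat}}(\mx,\vt)=\sum_{m=1}^{M}\int_{\Gamma_m}\Phi_\omega(\mx,\my)\varphi_m(\my)\,ds(\my),\qquad \Phi_\omega(\mx,\my)=\frac{i}{4}H_0^{(1)}(\omega\abs{\mx-\my}),
\]
where $\Phi_\omega$ is the outgoing fundamental solution of $\triangle+\omega^2$ (so the Sommerfeld condition holds automatically) and $\varphi_m$ is an unknown density. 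Imposing $u_{\mathrm{tot}}=u_{\mathrm{inc}}+u_{\mathrm{scat}}=0$ on $\Gamma$ gives the coupled system $\sum_{m'}\int_{\Gamma_{m'}}\Phi_\omega(\mx,\my)\varphi_{m'}(\my)\,ds(\my)=-u_{\mathrm{inc}}(\mx,\vt)$ for $\mx\in\Gamma_m$. Because the cracks are well separated, the off-diagonal kernels are smooth and $O(1)$, so they perturb the densities only at higher order and each crack may be treated in isolation to leading order.

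Next I would isolate the singular part of the kernel through the small-argument expansion $H_0^{(1)}(t)=1+\tfrac{2i}{\pi}\bigl(\ln(t/2)+\gamma\bigr)+O(t^2\ln t)$, which writes $\Phi_\omega(\mx,\my)=-\tfrac{1}{2\pi}\ln\abs{\mx-\my}+k_\omega+O(\abs{\mx-\my}^2\ln\abs{\mx-\my})$ with $k_\omega$ constant. Rescaling $\Gamma_m$ to the reference interval $[-1,1]$ replaces $\ln\abs{\mx-\my}$ by $\ln\ell+\ln\abs{\tilde s-\tilde t}$: the constant $\ln\ell$ pairs with the total charge $q_m=\int_{\Gamma_m}\varphi_m\,ds$, while the remaining logarithmic operator is inverted via the equilibrium density of $[-1,1]$, whose Robin constant $-\tfrac{1}{2\pi}\ln(\mathrm{cap}[-1,1])=\tfrac{1}{2\pi}\ln2$ is exactly what promotes $\ln\ell$ to $\ln(\ell/2)$. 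Since $u_{\mathrm{inc}}(\,\cdot\,,\vt)$ is smooth and $O(1)$ on a short crack, the leading-order balance $-\tfrac{1}{2\pi}\ln(\ell/2)\,q_m\approx-u_{\mathrm{inc}}(\mz_m,\vt)$ yields $q_m=\tfrac{2\pi}{\ln(\ell/2)}u_{\mathrm{inc}}(\mz_m,\vt)\bigl(1+O(1/\abs{\ln\ell})\bigr)$, the density to this order being a multiple of the equilibrium density.

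Then I would read off the far-field pattern. Inserting the large-argument expansion $\Phi_\omega(\mx,\my)\sim \dfrac{e^{i\omega\abs{\mx}}}{\abs{\mx}^{1/2}}\,C(\omega)\,e^{-i\omega\hat{\mx}\cdot\my}$ and using $e^{-i\omega\hat{\mx}\cdot\my}=e^{-i\omega\hat{\mx}\cdot\mz_m}\bigl(1+O(\ell)\bigr)$ on the short crack, the far-field of $u_{\mathrm{scat}}$ becomes $u_\infty(\hat{\mx},\vt;\omega)=C(\omega)\sum_m e^{-i\omega\hat{\mx}\cdot\mz_m}q_m+\cdots$. Substituting $q_m$, fixing the overall constant by the normalization of $u_\infty$ in the definition above, and recognizing $e^{-i\omega\hat{\mx}\cdot\mz_m}=\overline{u_{\mathrm{inc}}(\mz_m,\hat{\mx};\omega)}$ produces the first line of~\eqref{AsymptoticFormula}; the second line is immediate from $u_{\mathrm{inc}}(\mz_m,\vt)\overline{u_{\mathrm{inc}}(\mz_m,\hat{\mx})}=\exp\bigl(i\omega(\vt-\hat{\mx})\cdot\mz_m\bigr)$.

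The step I expect to be the main obstacle is the asymptotic inversion in the second paragraph: showing rigorously that the remainder is $O(1/\abs{\ln\ell}^2)$ rather than merely $O(1/\abs{\ln\ell})$. This demands a uniform invertibility estimate for the rescaled single-layer operator on $[-1,1]$ and careful bookkeeping of the constant pieces $\gamma$, $\ln(\omega/2)$, and $\tfrac{i}{4}$, together with the equilibrium-measure correction, all of which must be shown to enter only at the next order. By contrast, the layer-potential representation, the decoupling of well-separated cracks, and the far-field evaluation are comparatively routine once the crack-length scaling is under control.
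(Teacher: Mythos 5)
The paper itself contains no proof of this lemma: it is imported verbatim (via the sentence ``based on \cite{AKLP}'') from the reference on asymptotic imaging of perfectly conducting cracks, and your sketch is essentially the standard derivation behind that citation --- single-layer representation with $\Phi_\omega=\frac{i}{4}H_0^{(1)}(\omega|\cdot|)$, extraction of the logarithmic kernel, the logarithmic capacity $\ell/2$ of a segment of length $2\ell$ producing the $\ln(\ell/2)$, inversion with relative error $O(1/|\ln\ell|)$, and far-field evaluation of the resulting monopole charges. So in substance you have reconstructed the right proof, and your remark that $\gamma$, $\ln(\omega/2)$ and $\frac{i}{4}$ enter only at order $1/|\ln\ell|^2$ is correct, since they multiply $q_m=O(1/|\ln\ell|)$. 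The one step you cannot actually discharge as written is ``fixing the overall constant by the normalization of $u_\infty$'': with the far-field normalization stated in Section 2, namely $u_{\mathrm{scat}}=\frac{e^{i\omega|\mx|}}{|\mx|^{1/2}}\bigl(u_\infty+O(1/|\mx|)\bigr)$, your computation yields the coefficient $\frac{e^{i\pi/4}}{\sqrt{8\pi\omega}}\,\frac{2\pi}{\ln(\ell/2)}$, not $-\frac{2\pi}{\ln(\ell/2)}$; there is no remaining freedom to absorb the factor $e^{i\pi/4}/\sqrt{8\pi\omega}$ or the sign. This discrepancy lies in the quoted statement rather than in your method --- the original result carries such a frequency-dependent constant, which the present paper (like much of this literature) suppresses because a multiplicative constant cancels in the normalized singular vectors and is immaterial for the imaging functional $\mathbb{F}$ --- but a complete proof of the lemma \emph{as stated} would have to either track that constant explicitly or adopt the normalization of the fundamental solution's far field in which it equals $-1$.
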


\subsection{Introduction to subspace migration}
At this point, we apply (\ref{AsymptoticFormula}) to explain an imaging technique known as subspace migration. From \cite{AGKPS}, subspace migration is based on the structure of singular vectors of the collected Multi-Static Response (MSR) matrix
\[\mathbb{K}=\bigg[u_\infty(\vv_j,\vt_l)\bigg]_{j,l=1}^{N}
=\left[
   \begin{array}{cccc}
     u_\infty(\vv_1,\vt_1) & u_\infty(\vv_1,\vt_2) & \cdots & u_\infty(\vv_1,\vt_N) \\
     u_\infty(\vv_2,\vt_1) & u_\infty(\vv_2,\vt_2) & \cdots & u_\infty(\vv_2,\vt_N) \\
     \vdots & \vdots & \ddots & \vdots \\
     u_\infty(\vv_N,\vt_1) & u_\infty(\vv_N,\vt_2) & \cdots & u_\infty(\vv_N,\vt_N)
   \end{array}
 \right],\]
where $u_\infty(\vv_j,\vt_l)$ is the far-field pattern with incident direction $\vt_l$ and observation direction $\vv_j$ for $j,l=1,2,\cdots,N$. For the sake of simplicity, we assume that we have coincide incident and observation directions, i.e., $\vv_j=-\vt_j$. Then, since the $jl-$th element of MSR matrix can be represented as
\[u_\infty(-\vt_j,\vt_l)=-\frac{2\pi}{\ln(\ell/2)}\sum_{m=1}^{M}\exp\bigg(i\omega(\vt_j+\vt_l)\cdot\mz_m\bigg),\]
$\mathbb{K}$ can be decomposed as follows
\begin{equation}\label{Decomposition}
  \mathbb{K}=-\frac{2\pi}{\ln(\ell/2)}\sum_{m=1}^{M}\mW(\mz_m;\omega)\mW(\mz_m;\omega)^T,
\end{equation}
where
\begin{equation}\label{VecW}
  \mW(\mx;\omega):=\frac{1}{\sqrt{N}}\bigg[\exp(i\omega\vt_1\cdot\mx),\exp(i\omega\vt_2\cdot\mx),\cdots,\exp(i\omega\vt_N\cdot\mx)\bigg]^T
\end{equation}

With this decomposition, we can introduce subspace migration imaging algorithm. Let us perform the Singular Value Decomposition (SVD) of $\mathbb{K}$ as
\begin{equation}\label{SVD}
  \mathbb{K}=\mathbb{UDV}^*\approx\sum_{m=1}^{M}\sigma_m\mU_m\mV_m^*.
\end{equation}
Then, by comparing (\ref{Decomposition}) and (\ref{SVD}), we can observe that the left- and right-singular vectors are satisfy
\begin{equation}\label{SingularVectors}
  \mU_m\approx\mathbf{W}(\mz_m;\omega)\quad\mbox{and}\quad\overline{\mV}_m\approx\mathbf{W}(\mz_m;\omega).
\end{equation}
Note that based on the orthonormal property of singular vectors, we can observe that
\begin{align}
\begin{aligned}\label{orthonormalproperty}
  &\langle\mathbf{W}(\mx;\omega),\mU_m\rangle\ne0\quad\mbox{and}\quad\langle\mathbf{W}(\mx;\omega),\overline{\mV}_m\rangle\ne0\quad\mbox{if}\quad\mx\approx\mz_m\\
  &\langle\mathbf{W}(\mx;\omega),\mU_m\rangle\approx0\quad\mbox{and}\quad\langle\mathbf{W}(\mx;\omega),\overline{\mV}_m\rangle\approx0\quad\mbox{if}\quad\mx\ne\mz_m.
\end{aligned}
\end{align}
Hence, we can introduce a filtering function, which is called subspace migration operated at $\omega$;
\begin{equation}\label{Subspacemigration}
  \mathbb{F}(\mx;\omega):=\left|\sum_{m=1}^{M}\left\langle\mathbf{W}(\mx;\omega),\mU_m\right\rangle\left\langle\mathbf{W}(\mx;\omega),\overline{\mV}_m\right\rangle\right|.
\end{equation}
Here, $\langle\mathbf{a},\mathbf{b}\rangle=\overline{\mathbf{a}}\cdot\mathbf{b}$. Then, based on the orthogonal property (\ref{orthonormalproperty}), we can find locations of $\mz_m\in\Gamma_m$ by finding $\mx$ such that $\mathbb{F}(\mx;\omega)\approx1$. A more detailed discussion can be found in \cite{AGKPS,JKHP,P1,P2,PL,PP}.

\section{Analysis of subspace migration and identification of exact locations}\label{sec3}
\subsection{Analysis of subspace migration with unknown frequency}
Although, subspace migration is fast, effective and stable imaging algorithm, the frequency $\omega$ must be known. However, it is very hard to identify locations of cracks via the map of $\mathbb{F}(\mx;\omega)$ when applied frequency $\omega$ is unknown. This fact is well-known but has been identified via the results of numerical simulations. Due to this reason, most of researches have been performed with \textit{a priori} information of $\omega$. In this section, we carefully explore the structure of subspace migration with unknown frequency and discuss its properties. For this purpose, we introduce a useful Lemma. This will play a key role in our exploration.

\begin{lem}[See \cite{KP}]\label{TheoremBessel}
  Suppose that $\set{\vt_n:n=1,2,\cdots,N}$ spans unit circle $\mathbb{S}^1$. Then, following relation holds for sufficiently large $N$, and $\vt,\mx\in\mathbb{R}^2$.
  \[\frac{1}{N}\sum_{n=1}^{N}\exp(i\omega\vt_n\cdot\mx)=\frac{1}{2\pi}\int_{\mathbb{S}^1}\exp(i\omega\vt\cdot\mx)d\vt=J_0(\omega|\mx|),\]
   where $J_0$ denotes the Bessel function of order zero of the first kind.
\end{lem}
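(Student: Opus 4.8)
The plan is to separate the statement into its two assertions and establish each in turn. The first assertion---that the normalized sum $\frac{1}{N}\sum_{n=1}^{N}\exp(i\omega\vt_n\cdot\mx)$ is well approximated by the average $\frac{1}{2\pi}\int_{\mathbb{S}^1}\exp(i\omega\vt\cdot\mx)\,d\vt$ for large $N$---I would treat as a Riemann-sum (equidistribution) statement. I would parametrize the unit circle by $\vt=\vt(\phi)=(\cos\phi,\sin\phi)$ with $\phi\in[0,2\pi)$ and place the directions $\vt_n$ at the equally spaced angles $\phi_n=2\pi n/N$; this is precisely what the hypothesis that $\set{\vt_n}$ spans $\mathbb{S}^1$ for large $N$ is meant to encode. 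The map $\phi\mapsto\exp(i\omega\vt(\phi)\cdot\mx)$ is smooth and $2\pi$-periodic, so the left-hand side is exactly the $N$-point trapezoidal/Riemann sum of the right-hand side. Its error therefore tends to $0$ as $N\to\infty$; in fact, for an analytic periodic integrand the trapezoidal rule is spectrally accurate, so the error decays faster than any power of $1/N$.

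For the second assertion I would evaluate the integral in polar form. Writing $\mx=|\mx|(\cos\psi,\sin\psi)$ gives $\vt(\phi)\cdot\mx=|\mx|\cos(\phi-\psi)$, so the average becomes $\frac{1}{2\pi}\int_0^{2\pi}\exp\big(i\omega|\mx|\cos(\phi-\psi)\big)\,d\phi$. Since the integrand is $2\pi$-periodic, the shift $\phi\mapsto\phi-\psi$ leaves the integral invariant, and the dependence on the direction of $\mx$ drops out entirely: the average depends only on $t:=\omega|\mx|$ and equals $\frac{1}{2\pi}\int_0^{2\pi}\exp(it\cos\theta)\,d\theta$. The final step is to identify this with $J_0(t)$ via the classical integral representation $J_0(t)=\frac{1}{2\pi}\int_0^{2\pi}\exp(it\cos\theta)\,d\theta$. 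If a self-contained derivation is preferred, I would substitute the Jacobi--Anger expansion $\exp(it\cos\theta)=\sum_{k=-\infty}^{\infty}i^k J_k(t)e^{ik\theta}$ and integrate term by term; every term with $k\ne0$ vanishes because $\int_0^{2\pi}e^{ik\theta}\,d\theta=0$, leaving exactly $J_0(t)=J_0(\omega|\mx|)$.

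The main obstacle is the first step rather than the second. The identification with $J_0$ is a textbook computation once the integral is in the reduced form, whereas the sum-to-integral passage is where care is needed: it is only an approximation for finite $N$, and its validity hinges on the directions $\vt_n$ being (asymptotically) equidistributed on $\mathbb{S}^1$. I would therefore make precise what ``$\set{\vt_n}$ spans $\mathbb{S}^1$ for sufficiently large $N$'' means---uniform distribution of the angles---and record the quantitative error estimate, so that the subsequent appearance of $J_0(\omega|\mx|)$ as the governing profile of the imaging functional is fully justified. Chaining the two pieces then yields the stated relation.
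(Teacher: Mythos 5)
Your proposal is correct and follows essentially the same route as the source: the paper states this lemma without proof, citing \cite{KP}, where the relation is obtained exactly as you describe --- equally spaced directions $\vt_n$ make the left-hand side a Riemann (trapezoidal) sum for the circle average, and rotation invariance together with the Jacobi--Anger expansion (equivalently, the classical integral representation $J_0(t)=\frac{1}{2\pi}\int_0^{2\pi}e^{it\cos\theta}d\theta$) identifies that average with $J_0(\omega|\mx|)$. Your additional care in pointing out that the first equality is only an approximation for finite $N$ --- with spectral accuracy for the periodic analytic integrand --- and in making precise the vague hypothesis that $\set{\vt_n}$ ``spans'' $\mathbb{S}^1$ is a genuine improvement over the bare statement, and it is consistent with how the identity is actually used in the proof of Theorem \ref{StructureofSubspaceMigration}.
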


Since, $\omega$ is unknown, we cannot use $\mW(\mx;\omega)$ of (\ref{VecW}). So, let us choose a value $\hat{\omega}$ and apply $\mW(\mx;\hat{\omega})$ to (\ref{Subspacemigration}) such that
\[\mathbb{F}(\mx;\hat{\omega}):=\left|\sum_{m=1}^{M}\left\langle\mathbf{W}(\mx;\hat{\omega}),\mU_m\right\rangle\left\langle\mathbf{W}(\mx;\hat{\omega}),\overline{\mV}_m\right\rangle\right|.\]
Then, we can obtain the following main result.
\begin{thm}\label{StructureofSubspaceMigration}
 For sufficiently large $N$, subspace migration imaging functional $\mathbb{F}(\mx;\hat{\omega})$ can be represented as follows:
  \[\mathbb{F}(\mx;\hat{\omega})\approx\sum_{m=1}^{M}J_0(\hat{\omega}|\mx-\hat{\mz}_m|)^{2}\quad\mbox{with}\quad\hat{\mz}_m=\frac{\omega}{\hat{\omega}}\mz_m.\]
\end{thm}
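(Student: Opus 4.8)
The plan is to substitute the singular-vector approximations from (\ref{SingularVectors}) directly into $\mathbb{F}(\mx;\hat{\omega})$ and then reduce each resulting inner product to a Bessel function by means of Lemma \ref{TheoremBessel}. Since $\mU_m\approx\mW(\mz_m;\omega)$ and $\overline{\mV}_m\approx\mW(\mz_m;\omega)$, both factors in the $m$-th summand collapse to the same inner product $\langle\mW(\mx;\hat{\omega}),\mW(\mz_m;\omega)\rangle$. Writing this out with the stated convention $\langle\mathbf{a},\mathbf{b}\rangle=\overline{\mathbf{a}}\cdot\mathbf{b}$ and the definition (\ref{VecW}) of $\mW$, I would combine the two exponential phases into a single phase depending on the vector $\omega\mz_m-\hat{\omega}\mx$, obtaining
\[\langle\mW(\mx;\hat{\omega}),\mW(\mz_m;\omega)\rangle=\frac{1}{N}\sum_{n=1}^{N}\exp\big(i\vt_n\cdot(\omega\mz_m-\hat{\omega}\mx)\big).\]

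The key step is to read Lemma \ref{TheoremBessel} as a statement about an arbitrary planar vector: absorbing the frequency into the argument gives $\frac{1}{N}\sum_{n=1}^{N}\exp(i\vt_n\cdot\mathbf{p})\approx J_0(|\mathbf{p}|)$ for $\mathbf{p}\in\mathbb{R}^2$ and sufficiently large $N$. Taking $\mathbf{p}=\omega\mz_m-\hat{\omega}\mx$ then yields
\[\langle\mW(\mx;\hat{\omega}),\mW(\mz_m;\omega)\rangle\approx J_0\big(|\omega\mz_m-\hat{\omega}\mx|\big).\]
A short rearrangement, $\omega\mz_m-\hat{\omega}\mx=-\hat{\omega}(\mx-\hat{\mz}_m)$ with $\hat{\mz}_m=(\omega/\hat{\omega})\mz_m$, together with positivity of $\hat{\omega}$, converts the argument into $\hat{\omega}|\mx-\hat{\mz}_m|$, so each inner product is approximately $J_0(\hat{\omega}|\mx-\hat{\mz}_m|)$.

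Finally, I would multiply the two identical factors to produce $J_0(\hat{\omega}|\mx-\hat{\mz}_m|)^2$ for each $m$, sum over $m$, and note that because $J_0$ is real-valued every summand is nonnegative; hence the outer absolute value in the definition of $\mathbb{F}$ acts trivially and can be dropped, giving the claimed formula. The step I expect to require the most care is the application of Lemma \ref{TheoremBessel}: the lemma is stated with an explicit $\omega$ multiplying $\vt_n\cdot\mx$, so I must justify that it applies verbatim to the combined, mismatched-frequency vector $\omega\mz_m-\hat{\omega}\mx$ rather than to a single scaled point, and that the large-$N$ approximation is uniform enough in $\mx$ for the identity to hold across the whole search region. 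A secondary subtlety is the implicit well-separatedness of the cracks, which is what allows the $m$-th singular triple to be matched to the $m$-th crack so that no cross terms between distinct cracks enter the summation.
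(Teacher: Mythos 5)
Your proposal is correct and follows essentially the same route as the paper's own proof: substitute the approximations (\ref{SingularVectors}) so each summand becomes $\langle\mW(\mx;\hat{\omega}),\mW(\mz_m;\omega)\rangle^2$, apply Lemma \ref{TheoremBessel} to the combined vector $\omega\mz_m-\hat{\omega}\mx$ (the paper realizes this same step by viewing the discrete sum as a Riemann sum for the integral over $\mathbb{S}^1$), and factor out $\hat{\omega}>0$ to obtain $J_0(\hat{\omega}|\mx-\hat{\mz}_m|)^2$. Your closing remarks on dropping the absolute value and on the well-separatedness of the cracks are consistent with what the paper does implicitly, so there is no substantive difference.
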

\begin{proof}
  Since the incident and observation direction configurations are same, we set $\triangle\vt_j:=|\vt_j-\vt_{j-1}|$ for $j=2,3,\cdots,N,$ and $\triangle\vt_1:=|\vt_1-\vt_N|$. Then, applying (\ref{SingularVectors}) and Lemma \ref{TheoremBessel} yields
  \begin{align*}
    \mathbb{F}(\mx;\hat{\omega})&=\left|\sum_{m=1}^{M}\left\langle\mathbf{W}(\mx;\hat{\omega}),\mU_m\right\rangle\left\langle\mathbf{W}(\mx;\hat{\omega}),\overline{\mV}_m\right\rangle\right|\approx\left|\sum_{m=1}^{M}\left\langle\mathbf{W}(\mx;\hat{\omega}),\mathbf{W}(\mz_m;\omega)\right\rangle^2\right|\\
    &=\sum_{m=1}^{M}\left(\frac{1}{N}\sum_{n=1}^{N}\exp(i\vt_n\cdot(\hat{\omega}\mx-\omega\mz_m))\frac{\triangle\vt_n}{2\pi}\right)^2=\frac{1}{4\pi^2}\sum_{m=1}^{M}\left(\int_{\mathbb{S}^1}\exp(i\vt\cdot(\hat{\omega}\mx-\omega\mz_m))d\vt\right)^2\\
    &=\sum_{m=1}^{M}J_0(|\hat{\omega}\mx-\omega\mz_m|)^2=\sum_{m=1}^{M}J_0(\hat{\omega}|\mx-\hat{\mz}_m|)^{2},
  \end{align*}
  where $\hat{\mz}_m=\omega\mz_m/\hat{\omega}$. This completes the proof.
\end{proof}

Note that $J_0(x)$ has the maximum value $1$ at $x=0$. Hence, map of $\mathbb{F}(\mx;\hat{\omega})$ will plot magnitude $1$ at $\hat{\mz}_m$ instead of true location $\mz_m$. This is the reason why inexact location of cracks are extracted via subspace migration.

\subsection{Identification of exact location of cracks: generating small scatterer}
Based on the result in Theorem \ref{TheoremBessel}, the values $\omega\mz_m$ can be extracted via subspace migration but true location $\mz_m$ cannot be identified unless estimating the value of unknown frequency $\omega$. At this moment, we propose an algorithm to estimate the value of unknown frequency $\omega$.

The idea is very simple. Let us create a small scatterer at $\my\in\mathbb{R}^2\backslash\Gamma$. Then, we can extract the value of $\omega\my$ via subspace migration. Then, since we know the location of $\my$, we can obtain an estimated value of $\omega$. Hence, correspondingly, it will be possible to identify the location of $\mz_m$. But, one has a problem: there must be no overlap between $\my$ and $\mz_m$ for $m=1,2,\cdots,M$. If not, this method will be fail. However, we have no \textit{a priori} information of $\mz_m$ so, selecting location of $\my$ must be considered beforehand.

Fortunately, we know the value of $\omega\mz_m$. This means that although we don't know the location of $\mz_m$, we can find a line $\mathcal{L}_m$ such that $\omega\mz_m\in\mathcal{L}_m$ for $m=1,2,\cdots,M$. Hence, we can estimate $\omega$ and correspondingly $\mz_m$ by selecting $\my\notin\mathcal{L}_m$.

\section{Results of numerical simulation}\label{sec4}
In this section, we present some results of numerical simulation for supporting Theorem \ref{StructureofSubspaceMigration}. For this purpose, $N=20$ different incident and observation directions are applied such that
\[\vt_n=\left[\cos\frac{2\pi n}{N},\sin\frac{2\pi n}{N}\right]^T.\]
As we mentioned in Section \ref{sec2}, applied angular frequency is of the form $\omega=2\pi/\lambda$ and $\lambda=0.4$ is applied. Hence, unknown frequency is $\omega\approx15.7080$.

Every far-field pattern data $u_\infty(\vv_j,\vt_l)$, $j,l=1,2,\cdots,N$ is generated by the second-kind Fredholm integral equation along the crack introduced in \cite[Chapter 4]{N} for avoiding the inverse crime. After obtaining the dataset, a $20$dB white Gaussian random noise is added to the unperturbed data. Three cracks $\Sigma_s$ with small length $\ell=0.05$ are chosen for numerical simulations such that
\begin{align*}
  \Sigma_1&=\set{[t-0.6,-0.2]^T:-\ell\leq t\leq\ell}\\
  \Sigma_2&=\set{\mathcal{R}_{\pi/4}[t+0.4,t+0.35]^T:-\ell\leq t\leq\ell}\\
  \Sigma_3&=\set{\mathcal{R}_{7\pi/6}[t+0.25,t-0.6]^T:-\ell\leq t\leq\ell}.
\end{align*}
Here, $\mathcal{R}_{\theta}$ denotes the rotation by $\theta$.

\begin{figure}[!ht]
\begin{center}
\includegraphics[width=0.45\textwidth]{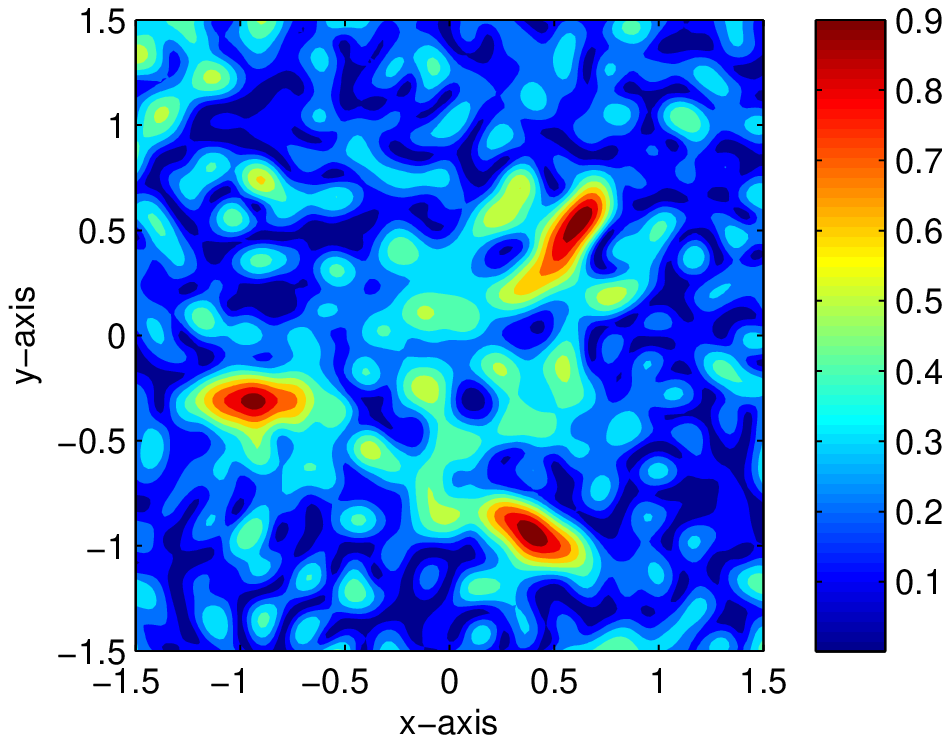}
\includegraphics[width=0.45\textwidth]{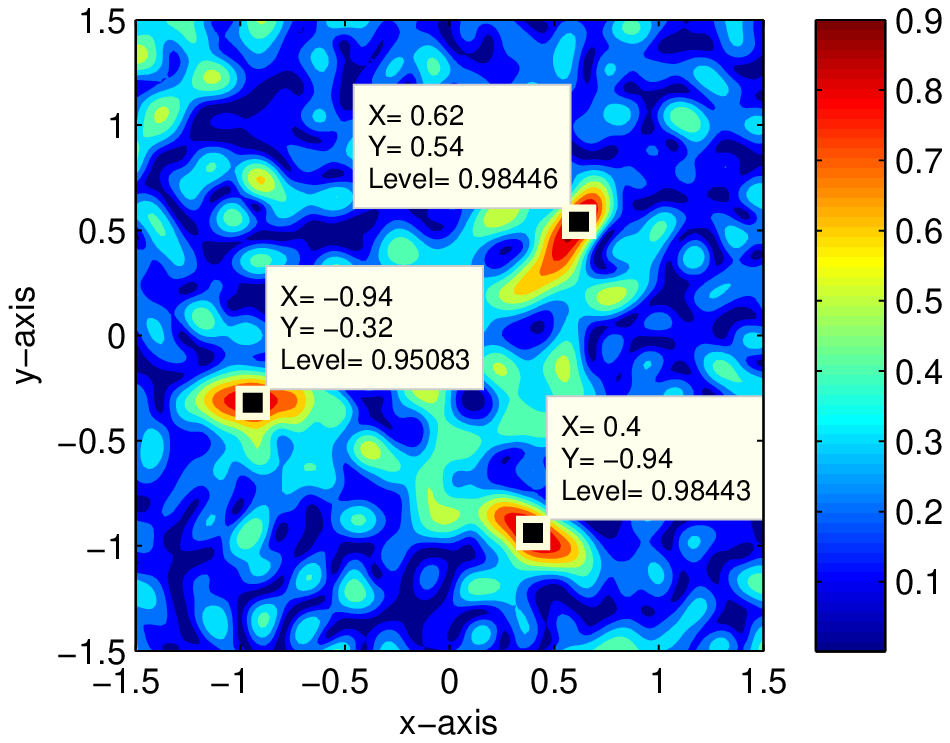}
\caption{\label{Figure1}Maps of $\mathbb{F}(\mx;10)$ with identified locations.}
\end{center}
\end{figure}

\begin{figure}[!ht]
\begin{center}
\includegraphics[width=0.45\textwidth]{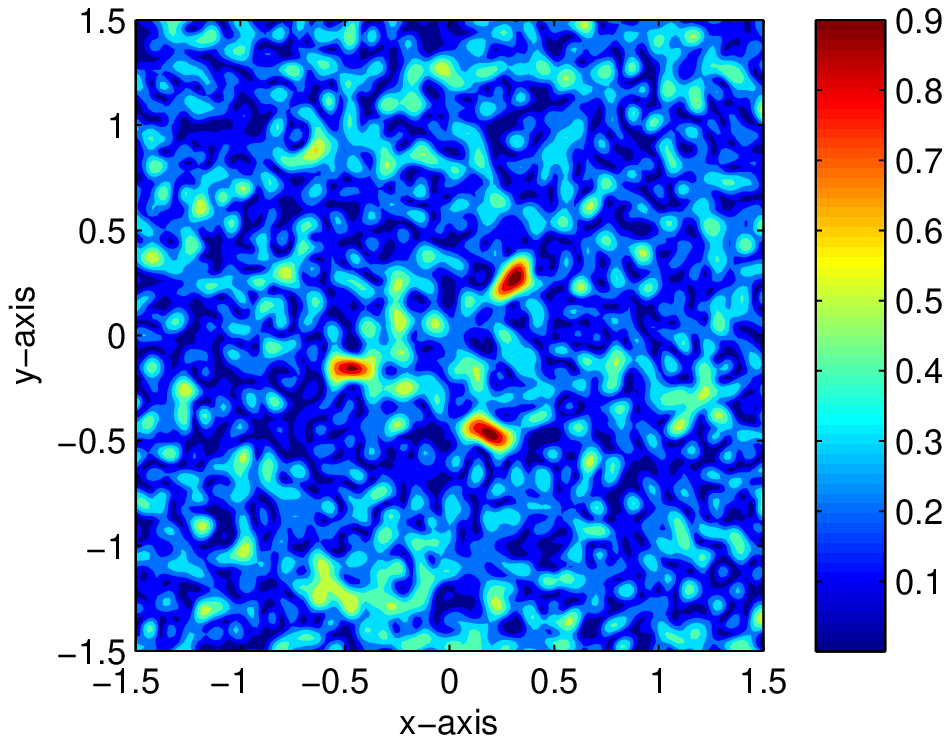}
\includegraphics[width=0.45\textwidth]{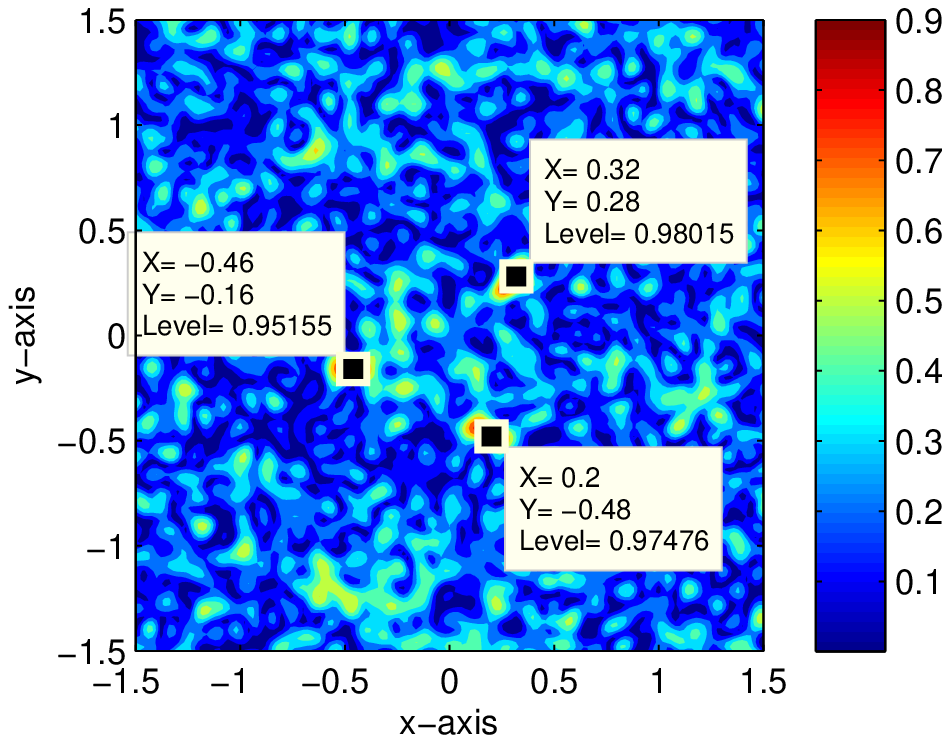}
\caption{\label{Figure2}Maps of $\mathbb{F}(\mx;20)$ with identified locations.}
\end{center}
\end{figure}

Now, let us identify location of $\mz_m$. Based on the results in Figures \ref{Figure1} and \ref{Figure2}, the values $\omega\mz_m$ can be identified so that three lines $\mathcal{L}_m$ are also. For example, based on the result in Figure \ref{Figure2}, three lines are given by (see Figure \ref{FigureLine} also)
\begin{align*}
  \mathcal{L}_1&:y=\frac{0.28}{0.32}x=0.8750x\quad(x>0),\\
  \mathcal{L}_2&:y=\frac{-0.16}{-0.46}x=0.3478x\quad(x<0),\\
  \mathcal{L}_3&:y=\frac{-0.48}{\phantom{-}0.20}x=-2.400x\quad(x>0).
\end{align*}

\begin{figure}[!ht]
\begin{center}
\includegraphics[width=0.45\textwidth]{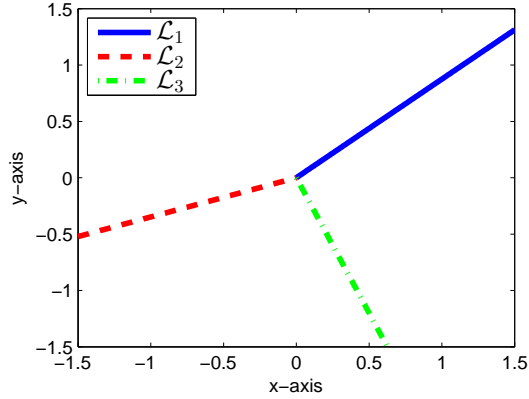}
\caption{\label{FigureLine}Three lines $\mathcal{L}_m$. $\mz_m$ is located somewhere on the $\mathcal{L}_m$.}
\end{center}
\end{figure}

Hence, we can create small scatterer at $\my\ne\mz_m\in\mathcal{L}_m$. Note that $\my$ must located far away from $\mz_m$ so, in this example, we create a small scatterer at $\my=[y_1,y_2]=[1.5,0]^T$. With this, by regarding map of $\mathbb{F}(\mx;20)$, we can obtain the value of $\hat{\my}=[\hat{y}_1,\hat{y}_2]^T=\omega\my/\hat{\omega}=[1.18,0]^T$. Hence, estimated angular frequency is
\[\omega\approx\frac{\hat{\omega}\hat{y}_1}{y_1}=\frac{1.18\times20}{1.5}=15.7333=\frac{2\pi}{0.4098}.\]
This value is almost same as the true frequency. Hence, almost exact locations $\mz_m$ can be identified via the map of $\mathbb{F}(\mx;15.7333)$, refer to Figure \ref{Figure4}.

\begin{figure}[!ht]
\begin{center}
\includegraphics[width=0.45\textwidth]{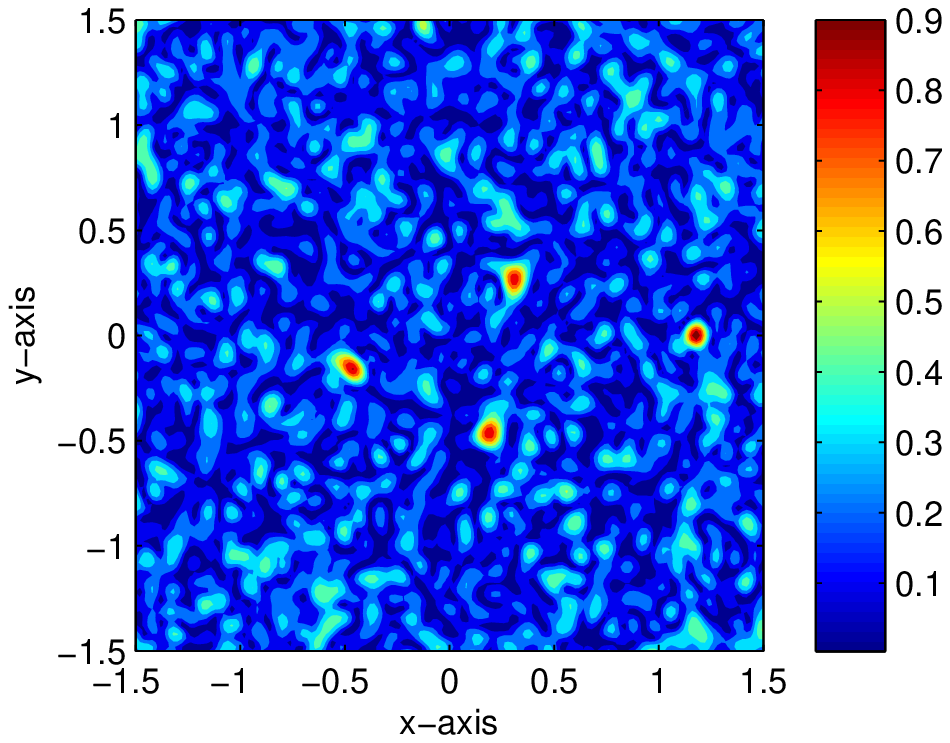}
\includegraphics[width=0.45\textwidth]{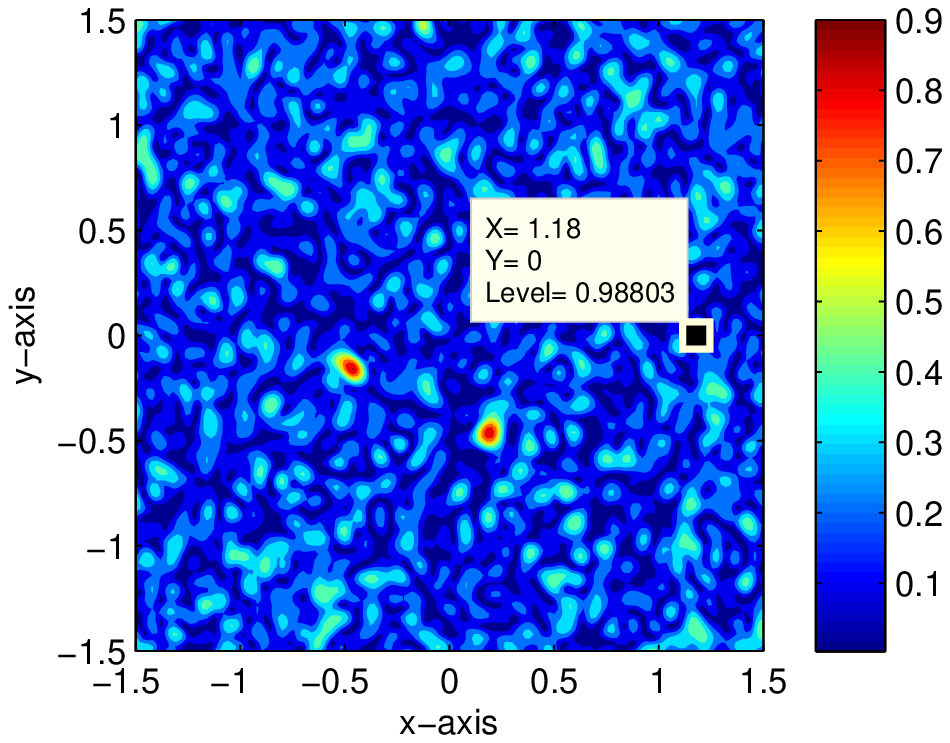}
\caption{\label{Figure3}Maps of $\mathbb{F}(\mx;20)$ and identified location of created scatterer.}
\end{center}
\end{figure}

\begin{figure}[!ht]
\begin{center}
\includegraphics[width=0.45\textwidth]{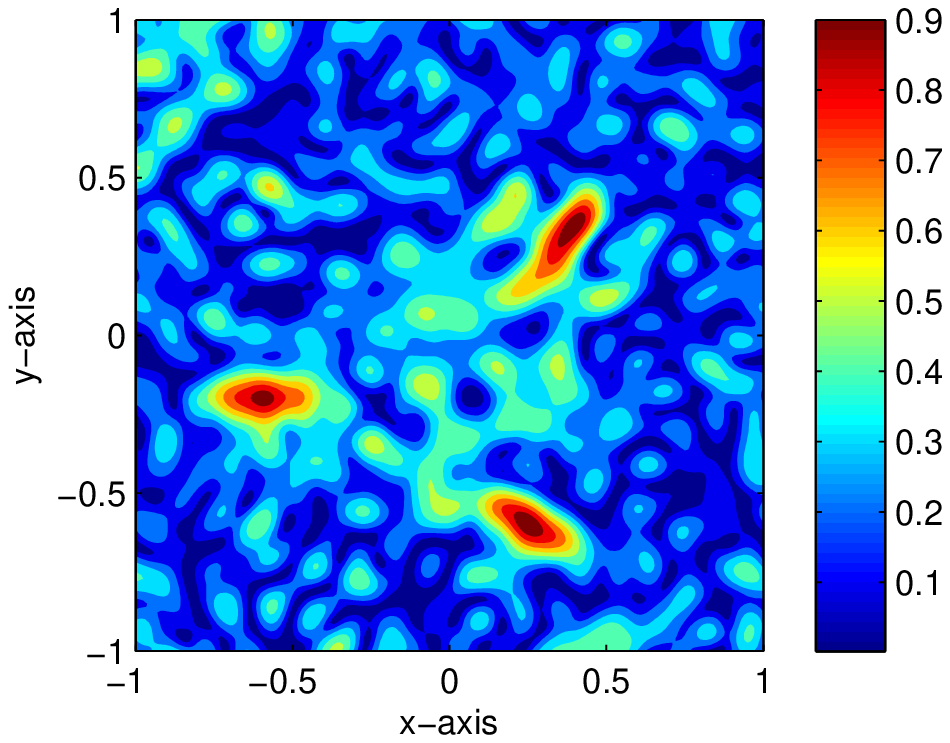}
\includegraphics[width=0.45\textwidth]{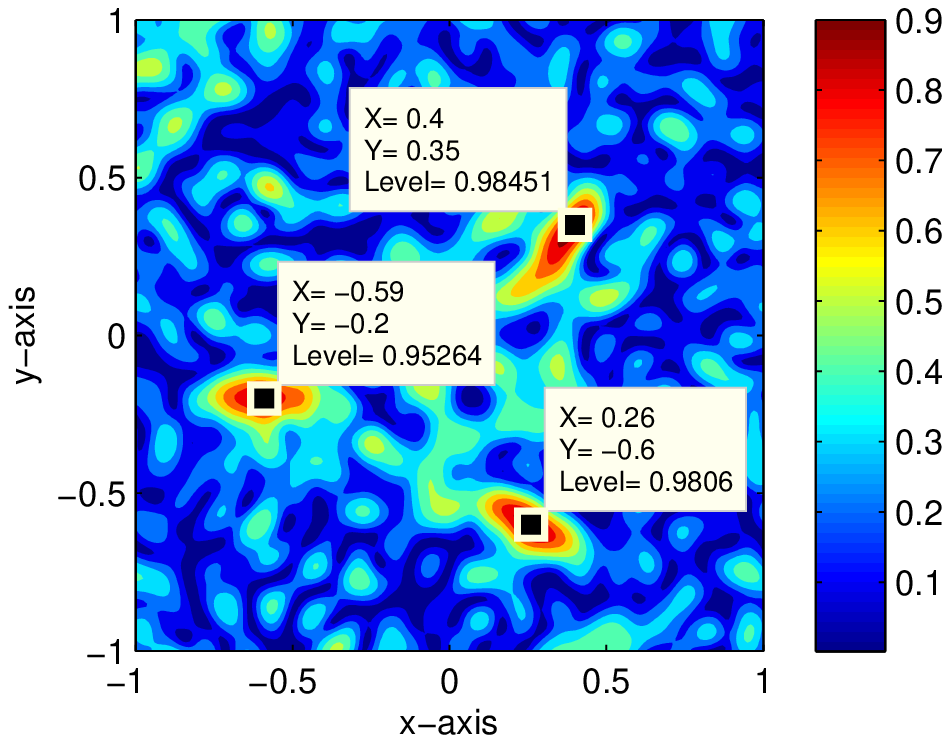}
\caption{\label{Figure4}Maps of $\mathbb{F}(\mx;15.7333)$ with identified locations.}
\end{center}
\end{figure}

\section{Conclusion}\label{sec5}
The structure of subspace migration functional for finding location of small perfectly conducting cracks is investigated when applied frequency is unknown. Based on its relationship with Bessel function of order zero of the first kind, we have confirmed the reason of ineffectiveness of subspace migration with unknown frequency information. Fortunately, based on the tendency of inaccurate result, we developed a simple algorithm for finding exact location of cracks by creating a small scatterer.

The main subject of this paper is imaging of small cracks in the two-dimensional problems. Extension to the arbitrary shaped arc-like cracks will be the forthcoming work. Moreover, development of exact location search algorithm for half-space problem \cite{AIL,P2,PL} or limited-view problem \cite{KP} will be an interesting research subject. Finally, we expect the methodology in this paper could be extended to the three-dimensional problem.

\end{document}